\theoremstyle{definition}
\newtheorem{theorem}{Theorem}
\newtheorem{lemma}[theorem]{Lemma}
\newtheorem{proposition}[theorem]{Proposition}
\newtheorem{definition}[theorem]{Definition}
\newtheorem{corollary}[theorem]{Corollary}
\newtheorem{question}[theorem]{Question}
\newtheorem*{mainthm}{Theorem \ref{thm:closed-finite}}
\newcommand\id{\operatorname{id}}
\title[Independence densities of countable hypergraphs]{Positive independence densities of finite rank countable hypergraphs are achieved by finite hypergraphs}
\author[P. Balister]{Paul Balister}
\address{Department of Mathematical Sciences, University of Memphis, Memphis TN 38152, USA}
\email{pbalistr@memphis.edu}
\thanks{Research of the first and second authors supported in part by NSF grant DMS-1301614}
\author[B. Bollob\'as]{B\'ela Bollob\'as}
\address{Department of Pure Mathematics and Mathematical Statistics, University of Cambridge, Wilberforce Road, Cambridge CB3\thinspace0WB, UK; {\em and\/}
Department of Mathematical Sciences, University of Memphis, Memphis TN 38152, USA}
\email{b.bollobas@dpmms.cam.ac.uk}
\thanks{Research of the second author supported in part by EU MULTIPLEX grant 317532}
\author[K. Gunderson]{Karen Gunderson}
\address{Department of Mathematics, University of Manitoba, Winnipeg MB R3T\thinspace2N2, Canada}
\email{karen.gunderson@umanitoba.ca}
\thanks{While the research took place, the third author was employed by the Heilbronn Institute for Mathematical Research, University of Bristol, Bristol, UK}
\date{18 April 2016}
\subjclass[2010]{05C65, 05C63, 05C69}
\keywords{Infinite hypergraphs, independent sets, matching number}
\begin{document}

\begin{abstract}
The independence density of a finite hypergraph is the probability that a subset of vertices, chosen uniformly at random contains no hyperedges.  Independence densities can be generalized to countable hypergraphs using limits.  We show that, in fact, every positive independence density of a countably infinite hypergraph with hyperedges of bounded size is equal to the independence density of some finite hypergraph whose hyperedges are no larger than those in the infinite hypergraph.  This answers a question of Bonato, Brown, Kemkes, and Pra{\l}at about independence densities of graphs.  Furthermore, we show that for any $k$, the set of independence densities of hypergraphs with hyperedges of size at most $k$ is closed and contains no infinite increasing sequences.
\end{abstract}

\maketitle

\section{Introduction}\label{sec:intro}

In a hypergraph, a subset of vertices is said to be \emph{independent} if it contains no hyperedges. Many problems in combinatorics, including questions in extremal graph theory about the number of $H$-free graphs, or Szemer\'edi's theorem on arithmetic progressions among others, can be expressed in terms of the maximum size or number of independent sets in certain hypergraphs.  Recall that the \emph{independence number} of a hypergraph $\mathcal{H}$, denoted $\alpha(\mathcal{H})$, is the maximum size of an independent set in $\mathcal{H}$ and $i(\mathcal{H})$ is the number of independent sets in~$\mathcal{H}$.  Let $\mathcal{I}(\mathcal{H})$ be the set of independent sets in $\mathcal{H}$ so that $|\mathcal{I}(\mathcal{H})| = i(\mathcal{H})$.

There have been a number of results giving bounds on both $\alpha(\mathcal{H})$ and $i(\mathcal{H})$ for certain classes of hypergraphs. Ajtai, Koml\'os, Pintz, Spencer, and Szemer\'edi \cite{AKPSS82} used random methods to give a lower bound on the independence numbers of $k$-uniform hypergraphs with no cycles of length $2$, $3$, or $4$ with a fixed average degree. A hypergraph is said to be \emph{linear} if any pair of hyperedges have at most one common vertex. Duke, Lefmann, and R\"odl \cite{DLR95} gave a similar lower bound for independence numbers of linear hypergraphs with average degree~$t$. Using this result, Cooper, Dutta, and Mubayi \cite{CDM14} gave lower bounds on $i(\mathcal{H})$ for $k$-uniform linear hypergraphs $\mathcal{H}$ with average degree~$t$. Further lower bounds on the independence number of $k$-uniform hypergraphs satisfying a maximum degree condition were given by Kostochka, Mubayi, and Verstra\"ete~\cite{KMV14}. Cutler and Radcliffe \cite{CR13} noted that the Kruskal--Katona theorem implies that the hypergraph on $n$ vertices with $m$ edges and the largest number of independent sets is that given by the first $m$ elements of $\binom{[n]}{k}$ in the lexicographic ordering. They further examined stronger notions of independence and gave asymptotic upper bounds on $i(\mathcal{H})$ for hypergraphs that are dense and have dense complements. Yuster \cite{rY06} considered algorithms for finding independent sets in $k$-uniform hypergraphs. Recent papers by Balogh, Morris, and Samotij \cite{BMS14} and by Saxton and Thomason \cite{ST14} have given new methods for enumerating independent sets in hypergraphs satisfying certain density conditions that have answered a number of previously open problems and provided new proofs of other results.

The \emph{independence density} of a finite hypergraph $\mathcal{H}$, denoted $\id(\mathcal{H})$, is the probability that a set of vertices chosen uniformly at random is an independent set.  That is, if $\mathcal{H}$ is a hypergraph on $n$ vertices, then
\begin{equation}\label{eq:def-id}
 \id(\mathcal{H}) = \frac{i(\mathcal{H})}{2^n}.
\end{equation}
In more generality, for any $p \in (0,1)$, define the \emph{independence $p$-density} of a hypergraph $\mathcal{H}$ by
\begin{equation}\label{eq:def-idp}
\id_p(\mathcal{H}) = \sum_{I \in \mathcal{I}(\mathcal{H})} p^{|I|}(1-p)^{n - |I|}.
\end{equation}
Then, $\id(\mathcal{H})= \id_{\frac{1}{2}}(\mathcal{H})$ and for any $p \in (0,1)$, $\id_p(\mathcal{H})$ is the probability that a subset of vertices, chosen at random with each vertex included independently with probability $p$, is an independent set.  This can be viewed as a re-scaling of the independence polynomial.

The precise definition of independence densities of countable hypergraphs is given in Definition~\ref{def:inf-id} below in terms of limits of independence densities of increasing sequences of finite induced subhypergraphs.

The notion of the independence density of a countable graph in the case $p = 1/2$ was introduced by Bonato, Brown, Kemkes, and Pra{\l}at \cite{BBKP11} who gave bounds on independence densities of graphs in terms of their matching number and showed that the set of independence densities of countable graphs is a set of rationals whose closure is also contained in the rationals. Further, they asked whether the set of positive independence densities of countable graphs is the same as the set of independence densities of finite graphs.

Subsequently, Bonato, Brown, Mitsche, and Pra{\l}at \cite{BBMP14} generalized independence densities to hypergraphs, again in the special case $p = 1/2$, and showed that any hypergraph whose hyperedges are of bounded size has a rational independence density and gave a construction showing that any real number in $[0,1]$ is the independence density of a countable hypergraph whose edge sizes are possibly unbounded.  Indeed, by a straightforward modification of their construction, one can show that for every $p \in (0,1)$ and for every $x \in [0,1]$, there is a countable hypergraph $\mathcal{H}$, whose edge sizes are possibly unbounded, with $\id_p(\mathcal{H}) = x$.

With this in mind, in this paper, only countable hypergraphs with bounded edge size are considered. Recall that the \emph{rank} of a hypergraph $\mathcal{H}$, denoted $r(\mathcal{H})$, is the supremum of the sizes of hyperedges in~$\mathcal{H}$. Following the notation in~\cite{BBMP14}, for any $k \geq 1$ and $p \in (0,1)$, define
\begin{equation}\label{eq:set-of-id}
 \mathbf{H}_{k, p} = \{\id_p(\mathcal{H}) \mid \mathcal{H} \text{ is a countable hypergraph with } r(\mathcal{H}) \leq k\}.
\end{equation}

The main result of this paper is Theorem~\ref{thm:closed-finite} below which shows that, in fact, all positive independence densities in $\mathbf{H}_{k, p}$ are given by finite hypergraphs of  rank at most~$k$. The $k$-uniform hypergraph consisting of countably many mutually disjoint hyperedges has independence $p$-density $0$ and for any $p < 1$, there is no finite rank $k$ hypergraph with independence $p$-density~$0$ since the empty set is always independent.

\begin{theorem}\label{thm:closed-finite}
For every $k \geq 1$ and $p \in (0,1)$, the set $\mathbf{H}_{k,p}$ is closed and furthermore,
\[
 \mathbf{H}_{k,p} = \{0\} \cup \{\id_p(\mathcal{H}) \mid \mathcal{H} \text{ is a finite hypergraph with } r(\mathcal{H}) \leq k\}.
\]
\end{theorem}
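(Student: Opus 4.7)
The plan is to use the probabilistic interpretation $\id_p(\mathcal{H}) = \Pr[S_p \text{ is independent in } \mathcal{H}]$, where $S_p$ is the random subset of $V(\mathcal{H})$ obtained by including each vertex independently with probability $p$; since the event that $S_p$ is independent is the countable intersection of the events $\{e \not\subseteq S_p\}$, this is well-defined for countable $\mathcal{H}$. The main realization claim is that every countable rank-$\leq k$ hypergraph $\mathcal{H}$ with $\id_p(\mathcal{H}) > 0$ is $\id_p$-equivalent to a finite rank-$\leq k$ hypergraph built from its ``minimal forbidden sets''; closure of $\mathbf{H}_{k,p}$ then follows by combining this realization with a separate induction showing that $\mathbf{H}_{k,p}$ contains no infinite strictly increasing sequence.

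Call a set $A \subseteq V(\mathcal{H})$ with $|A| \leq k$ \emph{forbidden} if $\Pr[A \subseteq S_p \text{ and } S_p \text{ is independent}] = 0$; every hyperedge is forbidden. Let $\mathcal{M}$ be the antichain of minimal forbidden sets and let $\mathcal{H}^{\mathcal{M}}$ be the hypergraph on $V(\mathcal{H})$ with edge set $\mathcal{M}$. First I would verify that $\id_p(\mathcal{H}) = \id_p(\mathcal{H}^{\mathcal{M}})$: every edge of $\mathcal{H}$ is forbidden and so contains some $A \in \mathcal{M}$, so independence in $\mathcal{H}^{\mathcal{M}}$ implies independence in $\mathcal{H}$; conversely, a countable union bound over $\mathcal{M}$ gives $\Pr[\exists A \in \mathcal{M}\colon A \subseteq S_p \text{ and } S_p \text{ is independent in }\mathcal{H}] = 0$, so the two independence events agree up to a null set. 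Once $\mathcal{M}$ is known to be finite, restricting $\mathcal{H}^{\mathcal{M}}$ to the finite vertex set $\bigcup \mathcal{M}$ gives the desired finite rank-$\leq k$ realization, since isolated vertices do not affect $\id_p$.

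The crucial step---and the one I expect to be the main obstacle---is showing $|\mathcal{M}| < \infty$ whenever $\id_p(\mathcal{H}) > 0$. Suppose for contradiction that $\mathcal{M}$ is infinite. By pigeonhole, some size $s \leq k$ appears in infinitely many members of $\mathcal{M}$, and a standard infinite-sunflower extraction (by induction on $s$: either some vertex lies in infinitely many sets, so one fixes it and reduces to size $s-1$, or one greedily picks pairwise disjoint sets when every vertex has finite multiplicity) produces $A_i = K \cup B_i \in \mathcal{M}$ with a fixed core $K$ and pairwise disjoint nonempty petals $B_i$. Minimality of $A_1$ forces $K$ itself \emph{not} to be forbidden, so $\Pr[K \subseteq S_p \text{ and } S_p \text{ is independent}] > 0$. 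Since each $A_i$ is forbidden, a conditional countable union bound gives $\Pr[\forall i\colon B_i \not\subseteq S_p \mid K \subseteq S_p \text{ and independent}] = 1$, whence $\Pr[K \subseteq S_p \text{ and } \forall i\colon B_i \not\subseteq S_p] > 0$. On the other hand, the events $\{B_i \subseteq S_p\}$ are independent (since the $B_i$ are pairwise disjoint and disjoint from $K$) with probabilities at least $p^s > 0$, so the second Borel--Cantelli lemma forces $\Pr[\forall i\colon B_i \not\subseteq S_p] = 0$, a contradiction.

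For closure I would induct on $k$. The auxiliary ``no infinite strictly increasing sequence'' property follows from the matching bound $\nu(\mathcal{K}) \leq \log\alpha/\log(1-p^k)$ for any finite $\mathcal{K}$ with $\id_p(\mathcal{K}) \geq \alpha > 0$: this bounds $|V_M|$ for a maximum matching $V_M$ of $\mathcal{K}$, and the matching decomposition writes $\id_p(\mathcal{K})$ as a fixed polynomial-in-$p$ combination of rank-$\leq k-1$ coefficients $\beta_T^{(n)}$ indexed by those $T \subseteq V_M$ satisfying ``condition~1'' (no induced edge of $\mathcal{K}$ within $V_M$ is a subset of $T$). After passing to a subsequence with a common base $(V_M, E_0)$, Erd\H{o}s--Szekeres applied coordinatewise (using the inductive no-increasing hypothesis for $\mathbf{H}_{k-1,p}$) produces a common nonincreasing refinement, contradicting any infinite strict increase of the $\id_p(\mathcal{K}_n)$'s. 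Given no increasing, closure reduces to the descending case $\alpha_n \searrow \alpha > 0$, where a parallel extraction yields limits $\beta_T = \lim \beta_T^{(n)} \in \mathbf{H}_{k-1,p}$ (by inductive closure) and the reassembly of $\alpha = \sum_T c_T \beta_T$ into a finite rank-$\leq k$ realization absorbs each $T$ with $\beta_T = 0$ as a new hyperedge of $V_M$ (deactivating that term via condition~1, legitimate because $\{T : \beta_T = 0\}$ is upward closed by the monotonicity $\beta_T \geq \beta_{T'}$ for $T \subseteq T'$) and attaches inductively realized finite rank-$\leq k-1$ representatives of the surviving $\beta_T$'s as edges with trace~$T$.
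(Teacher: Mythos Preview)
Your realization argument (the ``forbidden set'' reduction) is correct and is genuinely different from the paper's route.  The paper reaches the same conclusion via Lemma~\ref{lem:no-inf-match} and a recursive common-core extraction (Proposition~\ref{prop:almost-const-subseq}); your probabilistic argument is cleaner.  Defining $A$ to be forbidden when $\Pr[A\subseteq S_p\text{ and }S_p\text{ independent}]=0$, taking $\mathcal{M}$ to be the minimal forbidden sets, and then using an infinite sunflower plus the trivial product estimate $\prod_i(1-p^{|B_i|})\le\prod_i(1-p^k)=0$ is a nice replacement for the paper's combinatorial Lemma~\ref{lem:no-inf-match}.  The identification $\id_p(\mathcal{H})=\id_p(\mathcal{H}^{\mathcal{M}})$ via a countable union bound over $\mathcal{M}$ is fine since $V(\mathcal{H})$ is countable and $|A|\le k$.

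The closure argument, however, has a real gap at the ``reassembly'' step.  After extracting a common base $(V_M,E_0)$ and passing to limits $\beta_T=\lim_n\beta_T^{(n)}\in\mathbf{H}_{k-1,p}$, you propose to realize $\alpha=\sum_T c_T\beta_T$ by adding each $T$ with $\beta_T=0$ as an edge on $V_M$ and, for each surviving $T$, attaching a finite rank-$\le k-1$ representative $\mathcal{G}_T$ of $\beta_T$ ``as edges with trace $T$''.  But if the $\mathcal{G}_T$ live on disjoint fresh vertex sets, then in the assembled hypergraph the link of any $T'$ is $\bigcup_{T\subseteq T'}\mathcal{G}_T$ on a disjoint union of vertex sets, so its density is $\prod_{T\subseteq T'}\beta_T$, not $\beta_{T'}$.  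Thus $\id_p$ of your assembly equals $\sum_{T'}c_{T'}\prod_{T\subseteq T'}\beta_T$, which in general differs from $\sum_{T'}c_{T'}\beta_{T'}=\alpha$.  The point is that for each $n$ the family $(\beta_T^{(n)})_T$ comes from link hypergraphs sitting on a \emph{common} vertex set $V(\mathcal{K}_n)\setminus V_M$, and this coherence (far stronger than the monotonicity $\beta_T\ge\beta_{T'}$ you invoke) is lost once you realize the limits $\beta_T$ separately.  The inductive hypothesis ``$\mathbf{H}_{k-1,p}$ is closed'' only gives you each $\beta_T$ individually; it does not give you a coherent system of links.

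The paper sidesteps this entirely: Proposition~\ref{prop:almost-const-subseq} works directly with the sequence $\{\mathcal{H}_n\}$ and, by a $k$-step refinement, produces a single finite core $\mathcal{H}_0$ on $V_0$ and a finite family $\mathcal{A}\subseteq 2^{V_0}$ (those $A$ whose neighbourhood matchings outside $V_0$ are unbounded along the subsequence) such that every edge of $\mathcal{H}_{n_i}$ not inside $V_0$ contains some $A\in\mathcal{A}$.  Lemma~\ref{lem:add-set} then gives $\id_p(\mathcal{H}_{n_i})\to\id_p(\mathcal{H}_0\cup\mathcal{A})$ directly, with no reassembly needed.  Your ``no infinite increasing sequence'' argument via the matching decomposition and coordinatewise monotone refinement is fine and parallels the paper's Theorem~\ref{thm:no-inc-seq}; it is only the descending-limit realization that needs a different mechanism.
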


In addition, we show in Theorem~\ref{thm:no-inc-seq} that this set of independence densities, $\mathbf{H}_{k,p}$, has no infinite increasing sequences. In the special case of countable graphs and $p = 1/2$, we show, in Proposition~\ref{prop:graphs}, that every non-zero independence density of a countable graph is also the independence density of a finite graph, answering the question of Bonato, Brown, Kemkes, and Pra{\l}at from \cite{BBKP11} in the affirmative.

The remainder of this paper is organized as follows. In Section~\ref{sec:prelim}, some basic facts about independence densities are stated with their proofs. In Section~\ref{sec:results}, the proof of Theorem~\ref{thm:closed-finite} and other results are given.  Finally, in Section~\ref{sec:open}, some open questions are presented.

\section{Preliminaries}\label{sec:prelim}

In this section, some straightforward facts are given on independence densities. The proof that independence densities are well-defined for countable hypergraphs follows exactly as in \cite{BBMP14} for the case $p = 1/2$ and is included here for completeness.

\begin{lemma}\label{lem:sub-hyp}
If $p \in (0,1)$ and $\mathcal{H}$ and $\mathcal{G}$ are finite hypergraphs with $\mathcal{H} \subseteq \mathcal{G}$, then $\id_p(\mathcal{G}) \leq \id_p(\mathcal{H})$.
\end{lemma}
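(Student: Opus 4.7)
The plan is to use the probabilistic interpretation of $\id_p$. Recall that if $\mathcal{H}$ has $n$ vertices, then $\id_p(\mathcal{H}) = \mathbb{P}[R \in \mathcal{I}(\mathcal{H})]$, where $R$ is the random subset of $V(\mathcal{H})$ obtained by including each vertex independently with probability $p$; this is essentially the content of equation~(\ref{eq:def-idp}). Since $V(\mathcal{H})\subseteq V(\mathcal{G})$ in the containment $\mathcal{H}\subseteq\mathcal{G}$, I first sample a random subset $R\subseteq V(\mathcal{G})$ with each vertex included independently with probability $p$; then $R$ realizes the density of $\mathcal{G}$, while its restriction $R\cap V(\mathcal{H})$ realizes the density of $\mathcal{H}$.

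Next I would show the event inclusion $\{R\in\mathcal{I}(\mathcal{G})\}\subseteq\{R\cap V(\mathcal{H})\in\mathcal{I}(\mathcal{H})\}$. Indeed, if $R$ is independent in $\mathcal{G}$, then $R$ contains no edge of $\mathcal{G}$; since every edge of $\mathcal{H}$ is also an edge of $\mathcal{G}$, no edge of $\mathcal{H}$ is contained in $R$, and because every edge of $\mathcal{H}$ lies in $V(\mathcal{H})$, no edge of $\mathcal{H}$ is contained in $R\cap V(\mathcal{H})$ either. Monotonicity of probability gives $\id_p(\mathcal{G})\le\id_p(\mathcal{H})$.

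If a purely combinatorial proof is preferred, one can instead group the independent sets of $\mathcal{G}$ by their intersection with $V(\mathcal{H})$: writing each $I\in\mathcal{I}(\mathcal{G})$ as $I=J\cup K$ with $J=I\cap V(\mathcal{H})\in\mathcal{I}(\mathcal{H})$ and $K\subseteq V(\mathcal{G})\setminus V(\mathcal{H})$, one obtains
\[
\id_p(\mathcal{G})=\sum_{J\in\mathcal{I}(\mathcal{H})} p^{|J|}(1-p)^{|V(\mathcal{H})|-|J|}\!\!\sum_{\substack{K\subseteq V(\mathcal{G})\setminus V(\mathcal{H})\\ J\cup K\in\mathcal{I}(\mathcal{G})}}\!\! p^{|K|}(1-p)^{|V(\mathcal{G})\setminus V(\mathcal{H})|-|K|},
\]
and the inner sum is bounded above by $1$ (summing over all such $K$), leaving exactly $\id_p(\mathcal{H})$.

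There is no real obstacle here: the only point requiring any care is that $V(\mathcal{H})$ may be strictly contained in $V(\mathcal{G})$, which is handled cleanly by restricting the random set to $V(\mathcal{H})$ (probabilistic version) or by splitting the sum over $V(\mathcal{G})\setminus V(\mathcal{H})$ (algebraic version). Either phrasing produces a one-paragraph proof.
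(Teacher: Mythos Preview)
Your proof is correct and is essentially the paper's own argument: the paper likewise splits $V(\mathcal{G})$ into $V_1=V(\mathcal{H})$ and $V_2=V(\mathcal{G})\setminus V_1$, observes that an independent set of $\mathcal{G}$ restricts to an independent set of $\mathcal{H}$ on $V_1$, and bounds the contribution from $V_2$ by~$1$ (phrased there as $\id_p(\mathcal{G})\le \id_p(\mathcal{G}[V_1])\,\id_p(\mathcal{G}[V_2])\le \id_p(\mathcal{H})$). Your probabilistic and combinatorial phrasings are both clean renderings of the same idea.
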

\begin{proof}
Set $V_1 = V(\mathcal{H})$ and $V_2 = V(\mathcal{G}) \setminus V_1$. Since any subset of $V_1$ that is independent in $\mathcal{G}$ is also independent in~$\mathcal{H}$, $\mathcal{I}(\mathcal{G}[V_1]) \subseteq \mathcal{I}(\mathcal{H})$. Further, if $I$ is any independent set in~$\mathcal{G}$, then so are $I \cap V_1$ and $I \cap V_2$. Thus,
\[
\id_p(\mathcal{G}) \leq \id_p(\mathcal{G}[V_1]) \id_p(\mathcal{G}[V_2]) \leq \id_p(\mathcal{H}).
\]
\end{proof}

For any hypergraph $\mathcal{H}$, let $\mu(\mathcal{H})$ be the matching number of~$\mathcal{H}$; the maximum number of vertex-disjoint edges in~$\mathcal{H}$.  The following result is similar to Theorem 3 in \cite{BBMP14}.

\begin{lemma}\label{lem:match-bd}
Let $\mathcal{H}$ be a finite hypergraph with $r(\mathcal{H}) = k$ and a matching of size~$m$. Then,
\[
 \id_p(\mathcal{H}) \leq \big(1 - p^k \big)^m.
\]
\end{lemma}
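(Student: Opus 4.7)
The plan is to exploit the matching to restrict to a simple ``disjoint union of edges'' subhypergraph on which $\id_p$ factors cleanly. Fix a matching $e_1,\dots,e_m$ in $\mathcal{H}$ and let $V_M = e_1 \cup \cdots \cup e_m$ be the set of matched vertices. Let $\mathcal{H}_0$ denote the hypergraph with vertex set $V_M$ and edge set exactly $\{e_1,\dots,e_m\}$, i.e.\ a vertex-disjoint union of the $m$ matching edges.

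Next I would use Lemma~\ref{lem:sub-hyp} twice. First, since $\mathcal{H}[V_M]$ is a subhypergraph of $\mathcal{H}$ (same edges, fewer vertices), Lemma~\ref{lem:sub-hyp} gives $\id_p(\mathcal{H}) \leq \id_p(\mathcal{H}[V_M])$. Second, $\mathcal{H}_0 \subseteq \mathcal{H}[V_M]$ (same vertex set, fewer edges), so Lemma~\ref{lem:sub-hyp} gives $\id_p(\mathcal{H}[V_M]) \leq \id_p(\mathcal{H}_0)$. Chaining these yields $\id_p(\mathcal{H}) \leq \id_p(\mathcal{H}_0)$.

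Because $\mathcal{H}_0$ is a disjoint union of the single-edge hypergraphs on $e_1,\dots,e_m$, and the independence $p$-density multiplies over vertex-disjoint components (this is immediate from the definition \eqref{eq:def-idp}, since the independent sets of a disjoint union are products of independent sets of the components), we get
\[
 \id_p(\mathcal{H}_0) = \prod_{i=1}^{m} \id_p(\{e_i\}) = \prod_{i=1}^{m}\bigl(1 - p^{|e_i|}\bigr),
\]
using that the only non-independent subset of a single hyperedge $e_i$ is $e_i$ itself. Finally, $|e_i| \leq k$ and $p \in (0,1)$ imply $p^{|e_i|} \geq p^k$, hence $1 - p^{|e_i|} \leq 1 - p^k$, and the product is at most $(1-p^k)^m$.

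There is no real obstacle here; the only subtlety worth flagging is making sure both applications of Lemma~\ref{lem:sub-hyp} are set up with the correct direction of inclusion (the one that discards vertices and the one that discards edges), and that the single-edge factor $1 - p^{|e_i|}$ is computed correctly from \eqref{eq:def-idp} by observing that every proper subset of a single hyperedge is independent.
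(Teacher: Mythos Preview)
Your proof is correct and follows essentially the same idea as the paper: bound $\id_p(\mathcal{H})$ above by $\prod_{i=1}^m (1-p^{|e_i|})$ and then use $|e_i|\le k$. The only cosmetic difference is that the paper justifies that product bound in one line by the direct probabilistic observation that an independent set avoids fully containing each $e_i$ and these events are independent, whereas you route the same inequality through two applications of Lemma~\ref{lem:sub-hyp} and the multiplicativity of $\id_p$ over disjoint components.
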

\begin{proof}
Let $E_1, E_2, \dots, E_m$ be a matching in~$\mathcal{H}$. Any independent set in $\mathcal{H}$ does not contain all of the vertices in any particular edge in the matching. Since the edges of the matching are pairwise disjoint,
\[
 \id_p(\mathcal{H}) \leq \prod_{i = 1}^{m}\big(1-p^{|E_i|}\big) \leq \big(1 - p^k \big)^m.
\]
\end{proof}

The definition of independence density is extended to infinite graphs via limits. The notion of `chains' are used to show that this can be done in an unambiguous way.

\begin{definition}
Let $\mathcal{H}$ be a countable hypergraph. A sequence $\{\mathcal{C}_n\}_{n \geq 1}$ of finite hypergraphs is called a \emph{chain for $\mathcal{H}$} if{f} for every~$n$, $\mathcal{C}_n$ is an induced subhypergraph of $\mathcal{C}_{n+1}$, $\cup_{n \geq 1} V(\mathcal{C}_n) = V(\mathcal{H})$ and $\cup_{n \geq 1} E(\mathcal{C}_n) = E(\mathcal{H})$.
\end{definition}

The next lemma follows the same argument as Theorem 2 in \cite{BBMP14}.

\begin{lemma}
Let $\{\mathcal{C}_n\}_{n \geq 1}$ and $\{\mathcal{G}_n\}_{n \geq 1}$ be chains for a hypergraph~$\mathcal{H}$. Then, for any $p \in (0,1)$,
\[
 \lim_{n \to \infty} \id_p(\mathcal{C}_n) = \lim_{n \to \infty} \id_p(\mathcal{G}_n).
\]
\end{lemma}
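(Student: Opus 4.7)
The plan is to show that both limits exist and coincide by comparing each chain against the other using Lemma~\ref{lem:sub-hyp}.

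First I would observe that for a fixed chain $\{\mathcal{C}_n\}$, since $\mathcal{C}_n$ is an induced subhypergraph of $\mathcal{C}_{n+1}$ we have $\mathcal{C}_n \subseteq \mathcal{C}_{n+1}$ as hypergraphs, so by Lemma~\ref{lem:sub-hyp} the sequence $\id_p(\mathcal{C}_n)$ is non-increasing. It is also bounded below by $0$, so the limit exists. The same holds for $\{\mathcal{G}_n\}$.

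Next, I would fix an arbitrary $n$ and locate an index $m=m(n)$ so that $\mathcal{C}_n \subseteq \mathcal{G}_m$ as hypergraphs (not necessarily as an induced subhypergraph). Since $V(\mathcal{C}_n)$ is finite and $V(\mathcal{H})=\bigcup_m V(\mathcal{G}_m)$ is an increasing union of vertex sets, there is some $m_1$ with $V(\mathcal{C}_n)\subseteq V(\mathcal{G}_{m_1})$; similarly, the finiteness of $E(\mathcal{C}_n)$ and $E(\mathcal{H})=\bigcup_m E(\mathcal{G}_m)$ yields $m_2$ with $E(\mathcal{C}_n)\subseteq E(\mathcal{G}_{m_2})$. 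Taking $m=\max(m_1,m_2)$ gives $\mathcal{C}_n\subseteq \mathcal{G}_m$, and Lemma~\ref{lem:sub-hyp} then provides
\[
 \id_p(\mathcal{G}_m) \le \id_p(\mathcal{C}_n).
\]

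Since $\id_p(\mathcal{G}_m)$ is non-increasing in $m$, passing to the limit on the left gives $\lim_{j\to\infty}\id_p(\mathcal{G}_j)\le \id_p(\mathcal{C}_n)$, and this holds for every $n$. Taking $n\to\infty$ yields $\lim_j \id_p(\mathcal{G}_j) \le \lim_n \id_p(\mathcal{C}_n)$. Swapping the roles of the two chains gives the reverse inequality and completes the proof.

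The argument is essentially routine given Lemma~\ref{lem:sub-hyp}; the one small point that requires care is that the comparison $\mathcal{C}_n\subseteq \mathcal{G}_m$ need not be induced, so I rely on Lemma~\ref{lem:sub-hyp} holding for arbitrary hypergraph containment (which its proof does give). That is the only conceptual step that might trip one up.
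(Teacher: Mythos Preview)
Your proof is correct and follows essentially the same approach as the paper: both argue that the sequences are non-increasing (hence convergent), then for each $n$ locate an index $m$ with $\mathcal{C}_n\subseteq \mathcal{G}_m$ and apply Lemma~\ref{lem:sub-hyp}, finishing by symmetry. Your write-up is slightly more explicit in justifying the containment (separately absorbing vertices and edges) and in noting that Lemma~\ref{lem:sub-hyp} does not require the containment to be induced, but the argument is the same.
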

\begin{proof}
Note that both of the limits exist since each of $\{\id_p(\mathcal{C}_n)\}_{n \geq 1}$ and $\{\id_p(\mathcal{G}_n)\}_{n \geq 1}$ are non-increasing sequences of positive reals.

For each $n \geq 1$, since $E(\mathcal{H}) = \cup E(\mathcal{C}_n) = \cup E(\mathcal{G}_n)$, there exists $n_1$ so that $\mathcal{C}_n \subseteq \mathcal{G}_{n_1}$ and $n_2$ so that $\mathcal{G}_n \subseteq \mathcal{C}_{n_2}$. Thus, by Lemma~\ref{lem:sub-hyp},
\[
 \lim_{n \to \infty} \id_p(\mathcal{G}_n) \leq \lim_{n \to \infty} \id_p(\mathcal{C}_n) \leq \lim_{n \to \infty} \id_p(\mathcal{G}_n),
\]
which completes the proof.
\end{proof}

As the limits of independence densities of a chain for a hypergraph do not depend on the particular choice of chain, these are used to define the independence density for a countable hypergraph.

\begin{definition}\label{def:inf-id}
Let $\mathcal{H}$ be a countable hypergraph and let $\{\mathcal{C}_n\}_{n \geq 1}$ be a chain for~$\mathcal{H}$. For every $p \in (0,1)$, the \emph{independence $p$-density of $\mathcal{H}$} is
\[
 \id_p(\mathcal{H}) = \lim_{n \to \infty} \id_p(\mathcal{C}_n).
\]
\end{definition}

Given a hypergraph $\mathcal{H}$ and a subset of vertices~$A$, the \emph{neighbourhood hypergraph} of $A$ is the set $\{F \subseteq A^c  \mid  F \cup A \in \mathcal{H}\}$, denoted $N_{\mathcal{H}}(A)$. The following lemma, whose proof is similar to that of Lemma~\ref{lem:match-bd}, describes the effect on the independence density of a hypergraph of adding an independent set, $X$, as a hyperedge in terms of the matching number of the neighbourhood hypergraph of the set $X$.

\begin{lemma}\label{lem:add-set}
Let $\mathcal{H}$ be a finite hypergraph with $r(\mathcal{H})= k$. Let $X \subseteq V(\mathcal{H})$ be an independent set and suppose that $\{Y_1, Y_2, \dots, Y_m\}$ are pairwise disjoint, non-empty sets of vertices, disjoint from~$X$, and with the property that for each $i \leq m$, $X \cup Y_i \in \mathcal{H}$. Then, $r(\mathcal{H} \cup \{X\}) = k$, and
\[
 \id_p(\mathcal{H} \cup \{X\}) < \id_p(\mathcal{H}) \leq \id_p(\mathcal{H} \cup \{X\}) + p^{|X|}\big(1 - p^{k - |X|} \big)^m.
\]
\end{lemma}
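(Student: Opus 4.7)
The plan is to analyze exactly which independent sets disappear when $X$ is added as a new hyperedge, and then bound their total $p$-weight using the disjoint structure provided by the $Y_i$.

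For the rank claim, observe that since each $Y_i$ is non-empty and $X \cup Y_i \in \mathcal{H}$, we have $|X| < |X \cup Y_i| \leq k$, so $|X| \leq k-1$. Adding the single hyperedge $X$ therefore cannot raise the supremum of edge sizes above $k$, while $r(\mathcal{H}) = k$ keeps it at exactly $k$.

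For the density comparison, the key observation is that $\mathcal{I}(\mathcal{H}) \setminus \mathcal{I}(\mathcal{H} \cup \{X\})$ consists precisely of the independent sets of $\mathcal{H}$ that contain $X$ as a subset (since $X$ is the only new hyperedge, and $X$ itself is independent in $\mathcal{H}$ by hypothesis, so such sets do exist). Letting $n = |V(\mathcal{H})|$ and writing any such set as $X \cup J$ with $J \subseteq V(\mathcal{H}) \setminus X$, I would compute
\[
\id_p(\mathcal{H}) - \id_p(\mathcal{H} \cup \{X\}) = p^{|X|}\sum_{\substack{J \subseteq V(\mathcal{H}) \setminus X \\ X \cup J \in \mathcal{I}(\mathcal{H})}} p^{|J|}(1-p)^{n - |X| - |J|}.
\]
The strict inequality $\id_p(\mathcal{H} \cup \{X\}) < \id_p(\mathcal{H})$ follows immediately since $J = \emptyset$ contributes the strictly positive term $p^{|X|}(1-p)^{n-|X|}$.

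For the upper bound, I would use that if $X \cup J$ is independent in $\mathcal{H}$ and $X \cup Y_i \in \mathcal{H}$, then $Y_i \not\subseteq J$ for each $i$. Relaxing the sum to all $J$ satisfying this weaker condition, the sum becomes $\Pr[Y_i \not\subseteq \mathbf{J} \text{ for all } i]$ where $\mathbf{J}$ is a random subset of $V(\mathcal{H}) \setminus X$ with each vertex included independently with probability $p$. Here is where pairwise disjointness of the $Y_i$ is used: the events $\{Y_i \subseteq \mathbf{J}\}$ are independent, giving
\[
\sum_{\substack{J \subseteq V(\mathcal{H}) \setminus X \\ Y_i \not\subseteq J\ \forall i}} p^{|J|}(1-p)^{n-|X|-|J|} = \prod_{i=1}^m \bigl(1 - p^{|Y_i|}\bigr).
\]
Finally, since $|Y_i| = |X \cup Y_i| - |X| \leq k - |X|$, each factor is at most $1 - p^{k-|X|}$, yielding the stated bound. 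No single step is a real obstacle; the only point requiring care is recognizing that the disjointness of the $Y_i$ is exactly what turns a union bound into a product, and that the edge-size constraint $|Y_i| \leq k - |X|$ is what converts the product into the clean quantity $(1 - p^{k-|X|})^m$.
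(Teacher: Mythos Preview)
Your proof is correct and follows essentially the same approach as the paper: both identify the difference $\id_p(\mathcal{H}) - \id_p(\mathcal{H} \cup \{X\})$ as the total $p$-weight of independent sets of $\mathcal{H}$ containing $X$, then bound this weight using the disjointness of the $Y_i$ to obtain the product $\prod_i(1-p^{|Y_i|})$ and the rank constraint $|Y_i|\le k-|X|$ to reach $(1-p^{k-|X|})^m$. Your probabilistic phrasing and explicit substitution $J=I\setminus X$ are minor cosmetic differences from the paper's presentation.
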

\begin{proof}
Set $\mathcal{H}_X = \mathcal{H} \cup \{X\}$.  Note that since $X \cup Y_1 \in \mathcal{H}$, $|X| \leq k$ and so $r(\mathcal{H}_X) = k$ also.

For every set $I$ that is an independent set in $\mathcal{H}$ with $X \nsubseteq I$, $I$ is also independent in~$\mathcal{H}_X$. Indeed, $\mathcal{I}(\mathcal{H})$ can be written as a disjoint union
\[
 \mathcal{I}(\mathcal{H}) = \mathcal{I}(\mathcal{H}_X) \cup \{I \mid X \subseteq I,\ I \text{ independent in } \mathcal{H}\}.
\]

If $X \subseteq I$ and $I$ is independent in $\mathcal{H}$, then for every $i \in [1,m]$ the set $I$ does not contain every vertex from each of the sets $Y_1, Y_2, \dots, Y_m$.  Since the sets $Y_i$ are pairwise disjoint,
\begin{align*}
\sum_{\underset{X \subseteq I}{I \in \mathcal{I}(\mathcal{H})}} p^{|I|}(1-p)^{|V(\mathcal{H})| - |I|}
	&\leq p^{|X|} \prod_{i = 1}^m\big(1-p^{|Y_i|}\big)\\
	&\leq p^{|X|} \big(1-p^{k-|X|}\big)^m.
	\end{align*}

Thus, since $X$ is independent in~$\mathcal{H}$, but not in~$\mathcal{H}_X$,
\[
 \id_p(\mathcal{H}_X) < \id_p(\mathcal{H}) \leq \id_p(\mathcal{H}_X) + p^{|X|}\big(1 - p^{k - |X|} \big)^m.
\]
\end{proof}

The following Corollary extends Lemma~\ref{lem:add-set} to infinite hypergraphs.

\begin{corollary}\label{cor:inf-match}
Let $\mathcal{H}$ be a countably infinite hypergraph with $r(\mathcal{H}) = k$ and let $X \notin \mathcal{H}$ be an independent set of vertices with the property that there is an infinite collection $\{Y_i\}_{i\geq 1}$ of pairwise disjoint sets of vertices, disjoint from $X$ such that for every $i \geq 1$, $X \cup Y_i \in \mathcal{H}$. Then, $r(\mathcal{H} \cup \{X\}) = r(\mathcal{H})$ and $\id_p(\mathcal{H}\cup \{X\}) = \id_p(\mathcal{H})$.
\end{corollary}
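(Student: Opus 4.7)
The plan is to reduce the claim to repeated applications of Lemma~\ref{lem:add-set} along a chain for $\mathcal{H}$ and then pass to the limit. The rank statement is immediate: since $X\cup Y_1 \in \mathcal{H}$ has size at most $k$ and $Y_1$ is non-empty, $|X| \leq k-1$, so $r(\mathcal{H}\cup\{X\}) = k$.

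For the density, I would fix a chain $\{\mathcal{C}_n\}_{n\geq 1}$ for $\mathcal{H}$. Since $X$ is finite and $\bigcup_n V(\mathcal{C}_n) = V(\mathcal{H})$, for all $n$ sufficiently large $X \subseteq V(\mathcal{C}_n)$; from that point on $\{\mathcal{C}_n \cup \{X\}\}$ is a chain for $\mathcal{H} \cup \{X\}$ (induced subhypergraph inclusion is preserved when adding the same edge to both sides). Hence $\id_p(\mathcal{H}\cup\{X\}) = \lim_n \id_p(\mathcal{C}_n \cup \{X\})$ and $\id_p(\mathcal{H}) = \lim_n \id_p(\mathcal{C}_n)$, so it suffices to compare these two sequences term by term.

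For each such $n$, let $m_n$ be the number of indices $i$ with $X \cup Y_i \in E(\mathcal{C}_n)$. Those particular $Y_i$ form a valid family for Lemma~\ref{lem:add-set} applied to the finite hypergraph $\mathcal{C}_n$ and the set $X$: they are pairwise disjoint, disjoint from $X$, and non-empty (since $X \notin \mathcal{H}$ but $X \cup Y_i \in \mathcal{H}$), and $X$ is independent in $\mathcal{C}_n$ because it is independent in $\mathcal{H}$. The lemma then yields
\[
 0 \leq \id_p(\mathcal{C}_n) - \id_p(\mathcal{C}_n \cup \{X\}) \leq p^{|X|}\bigl(1 - p^{k-|X|}\bigr)^{m_n}.
\]

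The key step, and the only place where infiniteness of the family $\{Y_i\}$ is used, is the observation that $m_n \to \infty$. This follows directly from the chain property $\bigcup_n E(\mathcal{C}_n) = E(\mathcal{H})$: each hyperedge $X \cup Y_i$ enters some $\mathcal{C}_n$ and remains in all subsequent terms, so $m_n$ is non-decreasing and unbounded. Since $k - |X| \geq 1$, we have $0 < p^{k-|X|} < 1$, so the upper bound tends to $0$; taking $n \to \infty$ in the inequality above gives $\id_p(\mathcal{H}\cup\{X\}) = \id_p(\mathcal{H})$. I do not anticipate a substantive obstacle here: the estimate from Lemma~\ref{lem:add-set} does all the work, and the only thing to verify carefully is that the chain eventually sees arbitrarily many of the witnesses $X \cup Y_i$.
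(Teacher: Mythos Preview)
Your argument is correct and follows essentially the same approach as the paper: apply Lemma~\ref{lem:add-set} along a chain and let the matching-size exponent tend to infinity. The only cosmetic difference is that the paper preselects a chain with $\{X\cup Y_i\}_{i=1}^n \subseteq \mathcal{C}_n$ (so the exponent is at least $n$ by construction), whereas you take an arbitrary chain and observe $m_n\to\infty$; both reach the same limit in the same way.
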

\begin{proof}
Let $\{\mathcal{C}_n\}_{n \geq 1}$ be any chain for $\mathcal{H}$ with the property that for each $n \geq 1$, $\{X \cup Y_i\}_{i = 1}^n \subseteq \mathcal{C}_n$. By Lemma~\ref{lem:add-set},
\[
 \id_p(\mathcal{C}_n \cup \{X\}) \leq \id_p(\mathcal{C}_n) \leq \id_p(\mathcal{C}_n \cup \{X\}) + p^{|X|}\big(1 - p^{k-|X|} \big)^n
\]
and since the sequence $\{\mathcal{C}_n \cup \{X\}\}_{n \geq 1}$ is a chain for $\mathcal{H} \cup \{X\}$,
\[
 \id_p(\mathcal{H} \cup \{X\}) = \lim_{n \to \infty} \id_p(\mathcal{C}_n \cup \{X\}) = \lim_{n \to \infty} \id_p(\mathcal{C}_n) = \id_p(\mathcal{H}).
\]
\end{proof}

Note that if $\mathcal{H}$ is a hypergraph with $A \subseteq B$ and $A, B \in \mathcal{H}$, then $\id_p(\mathcal{H}) = \id_p(\mathcal{H} \setminus \{B\})$ since any independent set in $\mathcal{H}$ does not contain all of the vertices of $A$ and hence does not contain the set~$B$. Thus, we are always free to assume that any hypergraph in question is an antichain since hyperedges containing other hyperedges can be discarded without changing the independence density. In particular, in the situation described in Corollary~\ref{cor:inf-match}, adding the set $X$ to $\mathcal{H}$ and then deleting all of the sets $\{X \cup Y_i\}_{i \geq 1}$ from the set of hyperedges does not change the independence density, though it may decrease the rank.  The fact that this can be done simultaneously for all such sets in a consistent way is described in the following lemma.

\begin{lemma}\label{lem:no-inf-match}
Let $\mathcal{H}$ be a countable hypergraph with $r(\mathcal{H}) = k$.  There exists a countable hypergraph $\mathcal{G}$ with $r(\mathcal{G}) \leq k$ with the property that for every $p \in (0,1)$, $\id_p(\mathcal{H}) = \id_p(\mathcal{G})$ and for every $A \subseteq V(\mathcal{G})$, the neighbourhood hypergraph of $A$ in $\mathcal{G}$ has no infinite matching.
\end{lemma}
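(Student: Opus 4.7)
The plan is to iterate Corollary~\ref{cor:inf-match} by transfinite recursion, at each step promoting to an edge some subset of $V(\mathcal{H})$ whose current neighbourhood hypergraph admits an infinite matching. Since each such step preserves $\id_p$ and the rank, and since only countably many subsets of $V(\mathcal{H})$ of size at most $k$ can ever be added, the process will stabilise at some countable ordinal, and the antichain of the resulting edge set will be the desired~$\mathcal{G}$. Concretely, I would set $\mathcal{H}^{(0)}=\mathcal{H}$; at a successor $\alpha+1$, if there is some $A\subseteq V(\mathcal{H})$ with $A\notin\mathcal{H}^{(\alpha)}$ such that $N_{\mathcal{H}^{(\alpha)}}(A)$ contains an infinite matching of non-empty edges, pick one such $A$ (by a fixed enumeration of the finite subsets of $V(\mathcal{H})$) and set $\mathcal{H}^{(\alpha+1)}=\mathcal{H}^{(\alpha)}\cup\{A\}$, otherwise set $\mathcal{H}^{(\alpha+1)}=\mathcal{H}^{(\alpha)}$; at a limit stage, take the union of edge sets.

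I would then verify three things. First, $r(\mathcal{H}^{(\alpha)})\leq k$ throughout, since each newly added $A$ satisfies $|A|+|Y_i|\leq k$ for some $i$ with $|Y_i|\geq 1$ (an infinite matching can contain at most one empty set). Second, $\id_p(\mathcal{H}^{(\alpha)})=\id_p(\mathcal{H})$ throughout: at a successor this is Corollary~\ref{cor:inf-match} when $A$ is independent in $\mathcal{H}^{(\alpha)}$, and is trivial otherwise, since then $A$ contains some existing edge and $\mathcal{I}$ is unchanged; at a limit $\alpha$, any chain $\{\mathcal{C}_n\}$ for $\mathcal{H}^{(\alpha)}$ has each finite $\mathcal{C}_n$ sitting inside some $\mathcal{H}^{(\beta_n)}$ with $\beta_n<\alpha$, so $\id_p(\mathcal{C}_n)\geq \id_p(\mathcal{H}^{(\beta_n)})=\id_p(\mathcal{H})$ by induction (using the version of Lemma~\ref{lem:sub-hyp} for chains), while $\mathcal{H}\subseteq\mathcal{H}^{(\alpha)}$ yields the reverse inequality $\id_p(\mathcal{H}^{(\alpha)})\leq\id_p(\mathcal{H})$. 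Third, each non-trivial successor adds a fresh subset of $V(\mathcal{H})$ of size at most $k$; since there are only countably many such subsets, the process stabilises at some countable ordinal $\alpha^\ast$.

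Finally, I let $\mathcal{G}$ be the antichain obtained from $\mathcal{H}^{(\alpha^\ast)}$ by discarding edges that properly contain other edges; this preserves $\id_p$ and leaves $r(\mathcal{G})\leq k$. If $A\subseteq V(\mathcal{G})$ is not independent in $\mathcal{G}$ then $A$ contains some $E\in\mathcal{G}$, and any $F\in N_\mathcal{G}(A)$ yields an edge $A\cup F\in\mathcal{G}$ containing $E$, which by the antichain property forces $A\cup F=E=A$ and $F=\emptyset$, so $N_\mathcal{G}(A)\subseteq\{\emptyset\}$ has no infinite matching. If $A$ is independent in $\mathcal{G}$ then $A\notin\mathcal{H}^{(\alpha^\ast)}$, and an infinite matching in $N_\mathcal{G}(A)\subseteq N_{\mathcal{H}^{(\alpha^\ast)}}(A)$ would make $A$ eligible for addition at stage $\alpha^\ast+1$, contradicting stabilisation. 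The main obstacle I anticipate is the passage of $\id_p$ through transfinite limit stages: new infinite matchings can genuinely emerge at a limit from edges contributed at unboundedly many earlier stages, so $\omega$ steps do not suffice and the recursion is truly transfinite, and one has to argue carefully using the fact that any finite sub-hypergraph of $\mathcal{H}^{(\alpha)}$ already lies inside a single $\mathcal{H}^{(\beta)}$ with $\beta<\alpha$.
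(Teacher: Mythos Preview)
Your argument is correct, and it takes a genuinely different route from the paper's. The paper does \emph{not} iterate: it defines in one shot
\[
\mathcal{A}=\{A\subseteq V(\mathcal{H})\mid \mu(N_{\mathcal{H}}(A))=\infty\},
\]
sets $\mathcal{H}'=\mathcal{H}\cup\mathcal{A}$, and lets $\mathcal{G}$ be the antichain of $\mathcal{H}'$. The work then shifts to proving that this single batch of additions already suffices: given a hypothetical infinite matching $\{X_i\}$ in $N_{\mathcal{G}}(B)$, the paper observes that for all but finitely many $i$ one has $B\cup X_i\in\mathcal{A}$ (else $B\in\mathcal{A}$ directly), and then recursively extracts from the infinite matchings witnessing $B\cup X_{n_i}\in\mathcal{A}$ an infinite matching in $N_{\mathcal{H}}(B)$ itself, forcing $B\in\mathcal{A}$ after all. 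In other words, the paper shows that the ``closure'' operation is idempotent on the first pass.

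Your transfinite construction trades this combinatorial argument for a termination argument: you allow new infinite matchings to emerge and simply keep adding, relying on the countability of $\{A\subseteq V(\mathcal{H}):|A|\le k\}$ to force stabilisation. The price is the limit-stage verification of $\id_p$, which you handle correctly (each finite piece of a chain for $\mathcal{H}^{(\alpha)}$ lies in some $\mathcal{H}^{(\beta)}$ with $\beta<\alpha$, giving $\id_p(\mathcal{C}_n)\ge\id_p(\mathcal{H})$, while $\mathcal{H}\subseteq\mathcal{H}^{(\alpha)}$ gives the reverse inequality). Your final case split on whether $A$ is independent in $\mathcal{G}$ is also clean; note that in the non-independent case your conclusion $N_{\mathcal{G}}(A)\subseteq\{\emptyset\}$ uses that every edge of $\mathcal{H}^{(\alpha^\ast)}$ contains a minimal one, which holds since all edges have size at most~$k$. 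The paper's approach is shorter and avoids ordinals entirely; yours is more modular and makes it transparent that the only obstruction to a naive $\omega$-step iteration is the emergence of new matchings at limit stages, which you explicitly flag.
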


\begin{proof}
Define the set of subsets of vertices
\[
\mathcal{A} = \{A \subseteq V(\mathcal{H}) \mid \mu(N_{\mathcal{H}}(A)) = \infty\}
\]
and set $\mathcal{H}' = \mathcal{H} \cup \mathcal{A}$.  In order to see that $\id_p(\mathcal{H}) = \id_p(\mathcal{H}')$, order the set $\mathcal{A} = \{A_1, A_2, \ldots \}$ in any way and let $\{\mathcal{C}_n\}_{n \geq 1}$ be a chain for $\mathcal{H}$  chosen so that for each $n \geq 1$, the vertex set of $\mathcal{C}_n$ contains $\cup_{i = 1}^n A_i$.  Further assume that if $i \in [1, n]$,  then $N_{\mathcal{C}_n}(A_i)$ contains a matching of size at least~$n$.   Define a chain for $\mathcal{H}'$ by setting $\mathcal{C}_n' = \mathcal{C}_n \cup \{A_1, A_2, \ldots, A_n\}$ for each $n \geq 1$.  Applying Lemma~\ref{lem:add-set} once for each set in $\{A_1, A_2, \ldots, A_n\}$ yields $\id_p(\mathcal{C}_n) \leq \id_p(\mathcal{C}_n') \leq \id_p(\mathcal{C}_n) + n p (1-p^{k-1})^n$.  Taking limits as $n$ tends to infinity gives $\id_p(\mathcal{H}) = \lim_{n \to \infty} \id_p(\mathcal{C}_n) = \lim_{n \to \infty} \id_p(\mathcal{C}_n') = \id_p(\mathcal{H}')$.

Next, define a new hypergraph by discarding all hyperedges in $\mathcal{H}'$ that contain some other hyperedge:
\[
\mathcal{G} = \{E \in \mathcal{H}' \mid \nexists\ B \subsetneq E \text{ with } B \in \mathcal{H}'\}.
\]
In order to show that $\id_p(\mathcal{G}) = \id_p(\mathcal{H}')$ and hence $\id_p(\mathcal{G}) = \id_p(\mathcal{H})$, let $\{\mathcal{D}_n\}_{n \geq 1}$ be a chain for $\mathcal{H}'$.  Note that since $\{\mathcal{D}_n\}_{n \geq 1}$ is a chain, then for every $n \geq 1$, if $B \in \mathcal{D}_n$ and $A \subseteq B$ with $A \in \mathcal{H}'$, we have $A \in \mathcal{D}_n$ also.  Define a chain for $\mathcal{G}$ by setting, for each $n \geq 1$, 
\[
\mathcal{D}_n' = \{E \in \mathcal{D}_n \mid \nexists\ B \subsetneq E \text{ with } B \in \mathcal{D}_n\}.
\]
For each $n \geq 1$, $\mathcal{D}_{n}'$ is an induced subhypergraph of $\mathcal{D}_{n+1}'$.  Further, since $\mathcal{D}_n'$ is a finite hypergraph that is obtained from $\mathcal{D}_n$ by deleting all hyperedges that contain some other hyperedge, by the comment following Corollary~\ref{cor:inf-match}, $\id_p(\mathcal{D}_n) = \id_p(\mathcal{D}_n')$ and $r(\mathcal{D}_n') \leq k$.  Taking limits gives $\id_p(\mathcal{G}) = \id_p(\mathcal{H}')$ and $r(\mathcal{G}) \leq k$.
  
  Thus, we have $\id_p(\mathcal{G}) = \id_p(\mathcal{H})$. What remains is to show that no set of vertices has an infinite matching in its neighbourhood hypergraph in~$\mathcal{G}$.

Fix $B \subseteq V(\mathcal{G})$ and suppose, in hopes of a contradiction, that $\mu(N_{\mathcal{G}}(B)) = \infty$. Let $\{X_i\}_{i \geq 1}$ be an infinite matching in $N_{\mathcal{G}}(B)$.  Note that if there are infinitely many $i$ with $B\cup X_i \in \mathcal{H}$, then $B \in \mathcal{A}$ and so by the definition of~$\mathcal{G}$, there would be no hyperedges in $\mathcal{G}$ that contain $B$ (except $B$ itself).

Thus, for all but finitely many $i$, $B\cup X_i \in \mathcal{A}$. These shall be used to construct an infinite matching in $N_{\mathcal{H}}(B)$, which is a contradiction. Fix $n_1$ so that $B \cup X_{n_1} \in \mathcal{A}$ and, by the definition of~$\mathcal{A}$, pick $F_{1} \in N_{\mathcal{H}}(B\cup X_{n_1})$.  Then, $F_{1} \cup X_{n_1} \in N_{\mathcal{H}}(B)$.

Proceeding recursively, suppose that $n_1, n_2, \dots, n_\ell$ and $F_1, F_2, \dots, F_\ell$ are given so that for every $i \in [1, \ell]$, $B\cup X_{n_i} \in \mathcal{A}$, $F_i \in N_{\mathcal{H}}(B\cup X_{n_i})$ and the set $\{F_i \cup X_{n_i}\}_{i = 1}^{\ell}$ is a matching of size $\ell$ in $N_{\mathcal{H}}(B)$. Since the set $V_\ell = \cup_{i = 1}^{\ell} (F_i \cup X_{n_i})$ is finite and the sets $\{X_i\}_{i \geq 1}$ are pairwise disjoint, there exists $n_{\ell+1} > n_{\ell}$ so that $X_{n_{\ell+1}} \cap V_\ell = \emptyset$ and $B\cup X_{n_{\ell+1}}\in \mathcal{A}$. Then, since $N_{\mathcal{H}}(B\cup X_{n_{\ell+1}})$ contains an infinite matching, there is some $F_{\ell+1} \in N_{\mathcal{H}}(B\cup X_{n_{\ell+1}})$ with $F_{\ell+1} \cap V_\ell = \emptyset$.  Adding $F_{\ell+1}\cup X_{n_{\ell+1}}$ to $V_\ell$ yields a matching of size $\ell+1$ in $N_{\mathcal{H}}(B)$.  Thus, since this sequence of matchings is increasing, $N_{\mathcal{H}}(B)$ contains an infinite matching, which is a contradiction.
\end{proof}

\section{Sequences of independence densities}\label{sec:results}

In this section, the proof of Theorem~\ref{thm:closed-finite} is given. Using the same tools, it is shown that for every $k \geq 2$, the set $\mathbf{H}_{k,p}$ contains no infinite increasing sequences.

The key tool to prove both these results is Proposition~\ref{prop:almost-const-subseq} below, which describes how one can assume that sequences in $\mathbf{H}_{k,p}$ that are bounded away from $0$ arise from hypergraphs sharing a common finite subhypergraph with sets outside this finite `core' associated with large matchings in certain neighbourhood hypergraphs.

\begin{proposition}\label{prop:almost-const-subseq}
Let $k \geq 2$ and let $\{x_n \mid n \geq 1 \} \subseteq \mathbf{H}_{k,p}$  with $\inf_{n \geq 1} x_n = x > 0$. There exists a sequence of countable hypergraphs $\{\mathcal{H}_n\}_{n \geq 1}$ such that for every~$n$, $r(\mathcal{H}_n) \leq k$, $\id_p(\mathcal{H}_n) = x_n$, and a finite hypergraph $\mathcal{H}_0$ on vertex set $V_0$ and an increasing sequence $\{n_i\}_{i \geq 1}$ such that for every $i \geq 1$,
\begin{enumerate}[(a)]
	\item $\mathcal{H}_{n_i}[V_0] = \mathcal{H}_0$, and
	\item if $E \in \mathcal{H}_{n_i}$ with $E \setminus V_0 \neq \emptyset$, then there exists $A \subseteq E \cap V_0$ with
	\[
	 \mu\big(\{F \subseteq V_0^c \mid F \cup A \in \mathcal{H}_{n_i}\}\big) \geq i.
	\]
\end{enumerate}
\end{proposition}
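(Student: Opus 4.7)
The plan is to carve out a finite common ``core'' $V_0 \subseteq V(\mathcal{H}_n)$ by a depth-$(k-1)$ iterative vertex-cover enlargement, passing to a convergent subsequence after each step. The two preparatory tools are Lemma~\ref{lem:no-inf-match} (with which we may assume that for every $A \subseteq V(\mathcal{H}_n)$ the quantity $m_n(A) := \mu(N_{\mathcal{H}_n}(A))$ is finite) and Lemma~\ref{lem:match-bd}: applied to any finite subhypergraph realising the matching number, it gives the uniform bound $\mu(\mathcal{H}_n) \leq M := \lfloor \log x / \log(1-p^k) \rfloor$, since $\id_p(\mathcal{H}_n) = x_n \geq x$.

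Begin with $V_0^{(0)}(n)$ the vertex set of some maximum matching of $\mathcal{H}_n$, so $|V_0^{(0)}(n)| \leq kM$. Inductively, given $V_0^{(j)}(n)$ of uniformly bounded size, pass to a subsequence and relabel vertices so that $V_0^{(j)}(n)$ is a fixed set $V_0^{(j)}$, $\mathcal{H}_n[V_0^{(j)}]$ is a fixed hypergraph $\mathcal{H}_0^{(j)}$, and each $A \subseteq V_0^{(j)}$ is classified either into $\mathcal{A}_\infty^{(j)}$ (if $m_n(A) \to \infty$) or into $\mathcal{A}_{\mathrm{bd}}^{(j)}$ (if $m_n(A) = c_A$ is eventually constant). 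Define
\[
V_0^{(j+1)}(n) := V_0^{(j)} \cup \bigcup_{A \in \mathcal{A}_{\mathrm{bd}}^{(j)}} C_n(A),
\]
where $C_n(A)$ is a minimum vertex cover of $N_{\mathcal{H}_n}(A)$; since $N_{\mathcal{H}_n}(A)$ has rank at most $k-|A|$ and matching $c_A$, we have $|C_n(A)| \leq (k-|A|)c_A$, so $|V_0^{(j+1)}(n)|$ is again uniformly bounded in $n$. Iterate for $k-1$ rounds and set $V_0 := V_0^{(k-1)}$ and $\mathcal{H}_0 := \mathcal{H}_0^{(k-1)}$.

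Now $\mathcal{A}_\infty^{(k-1)}$ is finite and $m_n(A) \to \infty$ for every $A$ in it, so one can extract an increasing subsequence $\{n_i\}$ with $m_{n_i}(A) \geq i$ for all $A \in \mathcal{A}_\infty^{(k-1)}$ simultaneously; condition (a) then holds by construction. For (b), fix an edge $E \in \mathcal{H}_{n_i}$ with $E \setminus V_0 \neq \emptyset$ and examine the chain $A_j := E \cap V_0^{(j)}$, $0 \leq j \leq k-1$. Since $V_0^{(0)}$ supports a maximum matching of $\mathcal{H}_{n_i}$, $A_0 \neq \emptyset$. If some $A_j$ lies in $\mathcal{A}_\infty^{(j)}$ (and hence, by the nested subsequences, in $\mathcal{A}_\infty^{(k-1)}$), then $m_{n_i}(A_j) \geq i$ and $A := A_j$ witnesses (b). Otherwise every $A_j$ lies in $\mathcal{A}_{\mathrm{bd}}^{(j)}$; but then $\emptyset \neq E \setminus A_j \in N_{\mathcal{H}_{n_i}}(A_j)$ must meet the vertex cover $C_{n_i}(A_j) \subseteq V_0^{(j+1)}$, forcing $|A_{j+1}| > |A_j|$. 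After $k-1$ strict increases starting from $|A_0| \geq 1$ we reach $|A_{k-1}| \geq k \geq |E|$, so $E \subseteq V_0$, a contradiction.

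The principal bookkeeping obstacle is threading the $k-1$ rounds of relabelling and subsequencing coherently: one must verify that the classification into $\mathcal{A}_\infty^{(j)}$ and $\mathcal{A}_{\mathrm{bd}}^{(j)}$ is preserved under further subsequencing and that the nested identifications $V_0^{(j)} \subseteq V_0^{(j+1)}$ can be made consistent across $n$. The depth $k-1$ is exactly what the edge-size bound $|E| \leq k$ permits, and the uniform bound $\mu(\mathcal{H}_n) \leq M$ is what prevents the initial cores $V_0^{(0)}(n)$ from growing with $n$.
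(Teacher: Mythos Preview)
Your argument is essentially the paper's own: a $k$-step core-enlargement driven by the uniform matching bound from Lemma~\ref{lem:match-bd}, passing to subsequences so that every $A$ in the current core is either ``unbounded'' or has its neighbourhood's maximum matching absorbed into the next core. Your minimum vertex covers $C_n(A)$ play exactly the role of the paper's choice of the vertex set of a maximum matching of $N_{\mathcal{H}_n}(A)$ (both are vertex covers of size at most $(k-|A|)c_A$), and your strict-increase chain $|A_0| < |A_1| < \cdots$ is the same contradiction as the paper's invariant $|E \cap V_j| \geq j$.

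One small technical patch: you define $m_n(A) = \mu(N_{\mathcal{H}_n}(A))$, which counts matchings in all of $A^c$, whereas condition (b) asks for a matching in $V_0^c$. Since $V_0$ is finite these differ by at most $|V_0|$, so when you extract the final subsequence you should demand $m_{n_i}(A) \geq i + |V_0|$ for each $A \in \mathcal{A}_\infty^{(k-1)}$, or (as the paper does) work throughout with the neighbourhood restricted to the complement of the current core.
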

\begin{proof}
Let $\{\mathcal{H}_n\}_{n \geq 1}$ be a sequence of countable hypergraphs with the property that for every~$n$, $\id_p(\mathcal{H}_n) = x_n$ and $r(\mathcal{H}_n) \leq k$.  Assume throughout, by Lemma~\ref{lem:no-inf-match}, that for every set $A$ and every~$n$, $\mu(\{F \mid F \cap A = \emptyset \text{ and } F \cup A \in \mathcal{H}_n\}) < \infty$.

The proof follows by a recursive construction. For every $j \leq k$, we construct a finite hypergraph $\mathcal{H}^{j}$ on vertex set $V_j$ and a subsequence $\{n_i^{(j)}\}_{i \geq 1}$ such that for every $i \geq 1$, with an appropriate relabelling of the vertices of $\mathcal{H}_{n_i^{(j)}}$,
\begin{enumerate}[(a)]
	\item $\mathcal{H}_{n_i^{(j)}}[V_j] = \mathcal{H}^j$, and
	\item if $E \in \mathcal{H}_{n_i^j}$ with $E \setminus V_j \neq \emptyset$, then either
		\begin{itemize}
			\item $\exists\ A \subseteq E \cap V_j$ with $\mu(\{F \subseteq V_j^c \mid F\cup A \in \mathcal{H}_{n_i^{(j)}}\}) \geq i$, or
			\item $|E \cap V_j| \geq j$.
		\end{itemize}
\end{enumerate}
The result then follows from the case $j = k$ since the hypergraphs $\mathcal{H}_{n_i^{(k)}}$ are of rank at most $k$ and so there is no hyperedge $E$ with $E \setminus V_k \neq \emptyset$ and $|E \cap V_k| \geq k$.

To begin the recursive construction with $j = 1$, note that since $\inf_{n} \id_p(\mathcal{H}_n) = x > 0$, then, by Lemma~\ref{lem:match-bd}, for every~$n$, $\mu(\mathcal{H}_n)$ is finite and
\[
 \mu(\mathcal{H}_n) \leq m_1 = \left\lfloor \frac{\log x}{\log(1 - p^k)} \right\rfloor.
\]

Since there are only finitely many rank $k$ hypergraphs on at most $m_1 k$ vertices, there is an infinite increasing sequence $\{n_i^{(1)}\}_{i \geq 1}$ and a finite hypergraph $\mathcal{H}^1$ on vertex set $V_1$ such that for every $i \geq 1$, after possibly relabelling vertices, $\mathcal{H}_{n_i^{(1)}}[V_1] = \mathcal{H}^1$ and a maximum matching of $\mathcal{H}_{n_i^{(1)}}$ is contained in~$\mathcal{H}^1$.

Since $\mathcal{H}^1$ contains a maximum matching for each~$\mathcal{H}_{n_i^{(1)}}$, if $E \setminus V_1 \neq \emptyset$ and $E \in \mathcal{H}_{n_i^{(1)}}$, then $E \cap V_1 \neq \emptyset$ and so $|E \cap V_1 | \geq 1$. This completes the base case of the recursion.

For the recursion step, suppose that $j \leq k-1$ and that $V_j$, $\mathcal{H}^j$, and $\{n_i^{(j)}\}_{i \geq 1}$ are given with the desired properties. Since $V_j$ is a finite set, there is an increasing sequence $\{{\ell}_i^{(j)}\}_{i \geq 1}$ which is a subsequence of $\{n_i^{(j)}\}$ with the property that if $A \subseteq V_j$ with
\[
 \sup_i \left\{\mu\big(\{F \subseteq V_j^c \mid F \cup A \in \mathcal{H}_{\ell_i^{(j)}}\}\big)\right\} = \infty,
\]
then for every $i \geq 1$, $\mu(\{F \subseteq V_j^c \mid F \cup A \in \mathcal{H}_{\ell_i^{(j)}}\}) \geq i$. Set
\[
 \mathcal{M}_j =\left\{A \subseteq V_j \mid \sup_i \left\{ \mu\big(\{F \subseteq V_j^c \mid F\cup A \in \mathcal{H}_{\ell_i^{(j)}}\}\big)\right\} < \infty \right\}.
\]
The set $\mathcal{M}_j$ is the set of all subsets of $V_j$ whose neighbourhood hypergraphs in the sequence have bounded matching number. Define the largest such matching number to be
\[
 m_j = \max_{A \in \mathcal{M}_j} \sup_i \left\{ \mu\big(\{F \subseteq V_j^c \mid F\cup A \in \mathcal{H}_{\ell_i^{(j)}}\}\big)\right\}.
\]
Then, $m_j < \infty$ by the definition of $\mathcal{M}_j$ and since $V_j$ is finite.

Since $\mathcal{M}_j$ is finite and there are finitely many rank at most $k$ hypergraphs on at most $k m_j$ vertices, there is a collection of finite hypergraphs $\{\mathcal{G}_A\}_{A \in \mathcal{M}_j}$ on the vertices of $V_j^c$ and an increasing sequence $\{m_i^{(j+1)}\}_{i \geq 1}$ that is a subsequence of $\{\ell_i^{(j)}\}_{i \geq 1}$ so that for every $A \in \mathcal{M}_j$ and for every $i$, possibly relabelling vertices in $V_j^c$,
\[
 \{F \subseteq V(\mathcal{G}_A) \mid F \cup A \in \mathcal{H}_{m_i^{(j+1)}}\} = \mathcal{G}_A
\]
and so that a maximum matching of $\{F \subseteq V_j^c \mid F \cup A \in \mathcal{H}_{m_i^{(j+1)}}\}$ is contained in~$\mathcal{G}_A$.

Set $V_{j+1} = V_j \cup \left( \cup_{A \in \mathcal{M}_j} V(\mathcal{G}_A) \right)$. Since $V_{j+1}$ is finite, there exists $\{n_i^{(j+1)}\}_{i \geq 1}$ that is a subsequence of $\{m_{i}^{(j+1)}\}_{i \geq 1}$ and a finite hypergraph $\mathcal{H}^{j+1}$ on $V_{j+1}$ so that for every $i$, $\mathcal{H}_{n_i^{(j+1)}}[V_{j+1}] = \mathcal{H}^{j+1}$.  By passing to a further subsequence, we can further assume that if $A \subseteq V_{j+1}$ is such that
\[
 \sup_i \left\{\mu\big(\{F \subseteq V_{j+1}^c \mid F \cup A \in \mathcal{H}_{n_{i}^{(j+1)}}\}\big)\right\} = \infty,
\]
then, for every $i \geq 1$, $\mu\big(\{F \subseteq V_{j+1}^c \mid F \cup A \in \mathcal{H}_{n_{i}^{(j+1)}}\}\big) \geq i$.

For the second condition still to prove, let $E \in \mathcal{H}_{n_i^{(j+1)}}$ be such that $E \setminus V_{j+1} \neq \emptyset$ and suppose that for every $A \subseteq E \cap V_{j+1}$,
\[
 \sup_i \left\{ \mu\big(\{F \subseteq V_{j+1}^c \mid F\cup A \in \mathcal{H}_{n_i^{(j+1)}}\}\big)\right\} < \infty.
\]
In particular, since any matching in the neighbourhood hypergraph of $E \cap V_j$ can contain at most $|V_{j+1} \setminus V_j|$ sets intersecting $V_{j+1} \setminus V_j$, then the matchings in the neighbourhood hypergraph of $E$ in the sequence of hypergraphs $\{\mathcal{H}_{n_i^{j}}\}_{i \geq 1}$ were bounded.  Thus, by the induction hypothesis, $|E \cap V_j| \geq j$ and since $V_{j+1}$ contains a maximum matching for the hypergraphs
\[
 \{F \subseteq V_j^c \mid F \cup (E \cap V_{j}) \in \mathcal{H}_{n_i^{(j+1)}}\},
\]
and there is at least one such edge, then $E \cap (V_{j+1} \setminus V_j) \neq \emptyset$ and so $|E \cap V_{j+1}| = |E \cap V_j| + |E \cap (V_{j+1} \setminus V_j)| \geq j+1$.

This completes the recursion step and hence the proof.
\end{proof}

For convenience, we now restate Theorem~\ref{thm:closed-finite} and give its proof.

\begin{mainthm}
For every $k \geq 1$ and $p \in (0,1)$, the set $\mathbf{H}_{k,p}$ is closed and furthermore,
\[
 \mathbf{H}_{k,p} = \{0\} \cup \{\id_p(\mathcal{H}) \mid \mathcal{H} \text{ is a finite hypergraph with } r(\mathcal{H}) \leq k\}.
\]
\end{mainthm}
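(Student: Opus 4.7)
First, the $k=1$ case is handled directly: a rank-$\leq 1$ hypergraph has only empty or singleton hyperedges, so its independence $p$-density is $0$ (if $\emptyset$ is an edge or if infinitely many singletons are edges) or $(1-p)^s$ for some $s \geq 0$; the set $\{0\} \cup \{(1-p)^s : s \geq 0\}$ is closed and each element is realized by a finite rank-$1$ hypergraph. For $k \geq 2$, the closure of $\mathbf{H}_{k,p}$ and the equality both follow from a single reduction: for any sequence $\{x_n\} \subseteq \mathbf{H}_{k,p}$ with $x_n \to x > 0$ (the case $x = 0$ is trivial, since $0 \in \mathbf{H}_{k,p}$ via Lemma~\ref{lem:match-bd} applied to a countable matching of $k$-edges), produce a finite rank-$\leq k$ hypergraph $\mathcal{G}$ with $\id_p(\mathcal{G}) = x$. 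Applied to a constant sequence $x_n = x > 0$, this also gives the ``furthermore'' direction.

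Pass to a subsequence with $x_n \geq x/2$ and apply Proposition~\ref{prop:almost-const-subseq} to obtain a finite common core $\mathcal{H}_0$ on $V_0$ and hypergraphs $\mathcal{H}_{n_i}$ satisfying its conditions. For each $i$, let
\[
 \mathcal{A}_i = \{A \subseteq V_0 : \mu(N_{\mathcal{H}_{n_i}}(A)) \geq i\}
\]
and set $\mathcal{H}_{n_i}^* = \mathcal{H}_{n_i} \cup \mathcal{A}_i$. Using Lemma~\ref{lem:add-set} along a chain for $\mathcal{H}_{n_i}$ and passing to the limit, each added set $A$ costs at most $p^{|A|}(1-p^{k-|A|})^i$ in $\id_p$. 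For $i$ large enough that $(1-p^k)^i < x/2$, Lemma~\ref{lem:match-bd} rules out $\emptyset \in \mathcal{A}_i$ (which would force $\id_p(\mathcal{H}_{n_i}) \leq (1-p^k)^i < x_{n_i}$), so $|A| \geq 1$ for every $A \in \mathcal{A}_i$ and
\[
 0 \leq \id_p(\mathcal{H}_{n_i}) - \id_p(\mathcal{H}_{n_i}^*) \leq 2^{|V_0|}(1-p^{k-1})^i \longrightarrow 0.
\]

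By property (b) of the Proposition, every $E \in \mathcal{H}_{n_i}^*$ with $E \setminus V_0 \neq \emptyset$ contains some $A \in \mathcal{A}_i \subseteq \mathcal{H}_{n_i}^*$; by the antichain observation preceding Lemma~\ref{lem:no-inf-match} (applied along a chain), deleting these supersets does not change $\id_p$. The remaining hypergraph is a finite rank-$\leq k$ hypergraph $\mathcal{G}_i$ on $V_0$, whose edge set is $\mathcal{H}_0 \cup \mathcal{A}_i$, together with countably many isolated vertices outside $V_0$, which also leave $\id_p$ unchanged. Hence $\id_p(\mathcal{G}_i) = \id_p(\mathcal{H}_{n_i}^*) \to x$. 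Since there are only finitely many rank-$\leq k$ hypergraphs on the finite set $V_0$, pigeonhole yields a single $\mathcal{G}$ equal to $\mathcal{G}_i$ for infinitely many $i$, and therefore $\id_p(\mathcal{G}) = x$.

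The main technical work is done in Proposition~\ref{prop:almost-const-subseq}, whose role is to extract a common finite core $V_0$ outside of which every hyperedge is ``controlled'' by a growing matching in the neighbourhood hypergraph of some subset of $V_0$; conditional on that proposition, the key subtlety in the argument above is verifying that the accumulated cost $2^{|V_0|}(1-p^{k-1})^i$ vanishes as $i \to \infty$, which hinges on ruling out $A = \emptyset$ via Lemma~\ref{lem:match-bd}, and then invoking the pigeonhole over the finite set of rank-$\leq k$ hypergraphs on $V_0$.
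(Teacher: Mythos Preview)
Your proof is correct and follows essentially the same route as the paper's: apply Proposition~\ref{prop:almost-const-subseq} to extract a finite core $\mathcal{H}_0$ on $V_0$, augment each $\mathcal{H}_{n_i}$ by the subsets of $V_0$ with large neighbourhood matchings, observe that this collapses everything outside $V_0$ without changing $\id_p$ in the limit, and conclude that the limit is the independence density of a finite hypergraph on $V_0$. The only substantive difference is organizational: the paper works with a single limiting family $\mathcal{A}$ and shows $\id_p(\mathcal{H}_n\cup\mathcal{A})=\id_p(\mathcal{H}_0\cup\mathcal{A})$ for every $n$, whereas you use an $i$-dependent $\mathcal{A}_i$ and finish with a pigeonhole over the finitely many rank-$\le k$ hypergraphs on $V_0$. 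Your explicit exclusion of $\emptyset\in\mathcal{A}_i$ via Lemma~\ref{lem:match-bd} and the separate treatment of $k=1$ are small clarifications not spelled out in the paper.
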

\begin{proof}
The hypergraph that consists of a countably infinite matching of sets of size $k$ has independence density $0$ and so we now consider only sequences of independence densities converging to positive numbers.

Let $\{\mathcal{H}_n\}_{n \geq 1}$ be a sequence of hypergraphs with rank at most $k$ and with $\lim_{n \to \infty} \id_p(\mathcal{H}_n) = x > 0$. By Proposition~\ref{prop:almost-const-subseq}, assume without loss of generality that there exists a finite set $V_0$ and a finite hypergraph $\mathcal{H}_0$ on $V_0$ so that for every $n \geq 1$, $\mathcal{H}_n[V_0] = \mathcal{H}_0$, and for every $E \in \mathcal{H}_n$ with $E \setminus V_0 \neq \emptyset$ there exists $A \subseteq E \cap V_0$ such that for every~$n$,
\[
 \mu\big(\{F \subseteq V_0^c \mid F \cup A \in \mathcal{H}_n\}\big) \geq n.
\]

Let $\mathcal{A}$ be the collection of all such sets $A \subseteq V_0$ whose neighbourhood hypergraphs contain unbounded matchings in $V_0^c$ and set $\mathcal{H}' = \mathcal{H}_0 \cup \mathcal{A}$. The hypergraph $\mathcal{H}'$ is finite and the remainder of the proof consists of showing that $\id_p(\mathcal{H}') = x$.

By the choice of $\mathcal{A}$ and applying Lemma~\ref{lem:add-set} repeatedly to finite hypergraphs in appropriately chosen chains for $\mathcal{H}_n$ and $\mathcal{H}_n \cup \mathcal{A}$,
\begin{align*}
\id_p(\mathcal{H}_n \cup \mathcal{A})  \leq \id_p(\mathcal{H}_n) & \leq \id_p(\mathcal{H}_n \cup \mathcal{A}) + \sum_{A \in \mathcal{A}} p^{|A|}\big(1 -p^{k - |A|} \big)^n\\
	&\leq \id_p(\mathcal{H}_n \cup \mathcal{A}) + (1+p)^{|V_0|} \big(1 - p^k \big)^n.
\end{align*}
Thus,
\[
 \lim_{n \to \infty} \id_p(\mathcal{H}_n \cup \mathcal{A}) = \lim_{n \to \infty} \id_p(\mathcal{H}_n) = x.
\]
Now, for every hyperedge $E \in \mathcal{H}_n$ with $E \nsubseteq V_0$, there exists $A \subseteq E$ with $A \in \mathcal{A}$. Thus, for every~$n$, $\id_p(\mathcal{H}_n \cup \mathcal{A})  = \id_p(\mathcal{H}')$.

Therefore, $\mathcal{H}'$ is the desired finite hypergraph with $r(\mathcal{H}') \leq k$ and $\id_p(\mathcal{H}') = x$.
\end{proof}

In the special case $k = 2$, Theorem~\ref{thm:closed-finite} guarantees that for every countable graph, there is a finite hypergraph of rank at most $2$ with the same independence density.  In fact, in the following proposition, it is shown that when $p = 1/2$, there is a finite graph with the same independence density.  This is the only place in this paper where a particular value for $p \in (0,1)$ is required.

\begin{proposition}\label{prop:graphs}
Let $G$ be a graph on a countable vertex set with $\id(G) > 0$. Then there is a finite graph $H$ with $\id(G) = \id(H)$.
\end{proposition}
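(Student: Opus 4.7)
The plan is to apply Theorem~\ref{thm:closed-finite} with $k=2$ and $p=1/2$ to obtain a finite hypergraph $\mathcal{H}$ of rank at most $2$ with $\id(\mathcal{H})=\id(G)$, and then to convert $\mathcal{H}$ into an honest graph $H$ with the same independence density. The only obstruction is that $\mathcal{H}$ may contain singleton hyperedges alongside its size-$2$ edges, and a graph may not.

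First, I would pass to an antichain form of $\mathcal{H}$, which by the remark following Corollary~\ref{cor:inf-match} does not change the independence density. In the antichain, any vertex $v$ for which $\{v\}\in\mathcal{H}$ cannot be contained in any larger hyperedge. Let $\{v_1\},\dots,\{v_s\}$ be the singleton hyperedges, and let $G'$ be the graph on $V(\mathcal{H})\setminus\{v_1,\dots,v_s\}$ whose edges are precisely the size-$2$ hyperedges of $\mathcal{H}$. Since each $v_i$ is forbidden from every independent set and contributes an independent factor of $1/2$, while the remainder of the probability splits off cleanly because no other hyperedge meets $\{v_1,\dots,v_s\}$, we get
\[
\id(\mathcal{H}) = (1/2)^s \cdot \id(G').
\]

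The final step exploits the fact that independence densities are multiplicative over vertex-disjoint unions, together with the observation that the triangle $K_3$ has independence density exactly $4/8=1/2$ (its independent sets being $\emptyset$ and the three singletons). Letting $H$ be the disjoint union of $G'$ with $s$ vertex-disjoint copies of $K_3$, I then obtain a finite graph with
\[
\id(H) \;=\; \id(G')\cdot \id(K_3)^s \;=\; \id(G')\cdot (1/2)^s \;=\; \id(\mathcal{H}) \;=\; \id(G),
\]
as required. There is no serious technical obstacle; the whole argument hinges on the fortuitous identity $\id(K_3)=1/2$, which is precisely the reason the proposition is restricted to $p=1/2$, as flagged before its statement.
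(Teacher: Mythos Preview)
Your proof is correct and follows essentially the same approach as the paper: apply Theorem~\ref{thm:closed-finite} to obtain a finite rank-$2$ hypergraph, strip the singleton edges (passing to an antichain), and then replace each singleton by a disjoint copy of $K_3$, exploiting the identity $\id(K_3)=1/2$. The paper's write-up phrases the final calculation in terms of $i(H)=i(\mathcal{H})\cdot 4^{|B|}$ rather than the multiplicativity of $\id$ over disjoint unions, but the construction and the underlying idea are identical.
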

\begin{proof}
By Theorem~\ref{thm:closed-finite}, there is a finite rank at most $2$ hypergraph $\mathcal{H}$ with $\id(G) = \id(\mathcal{H})$. If every hyperedge in $\mathcal{H}$ has exactly $2$ vertices, then $\mathcal{H}$ is the desired finite graph. Otherwise, set $B = \{x \in V(\mathcal{H}) \mid \{x \} \in \mathcal{H}\}$ and let $V' = \cup_{x \in B}\{a(x), b(x), c(x)\}$ be a collection of $3|B|$ new vertices. If there are any pairs in $\mathcal{H}$ containing vertices of~$B$, these can be deleted without changing the independence density.

Define a graph $H$ on $(V(\mathcal{H}) \cup V') \setminus B$ with edges given by those pairs in $\mathcal{H}$ not containing vertices from $B$ and for every $x \in B$, a copy of $K_3$ on the vertices $\{a(x), b(x), c(x)\}$.

Note that if $I$ is independent in~$\mathcal{H}$, then $I \cap B = \emptyset$ and since $i(K_3) = 4$,
\[
 i(H) = i(\mathcal{H}) \cdot \prod_{x \in B} i(K_3) = i(\mathcal{H}) 4^{|B|}.
\]
Thus,
\[
 \id(H) = \frac{i(\mathcal{H} )\cdot 4^{|B|}}{2^{|V(\mathcal{H})| - |B| + 3|B|}} = \frac{i(\mathcal{H})}{2^{|V(\mathcal{H})|}} \cdot \frac{4^{|B|}}{2^{2|B|}} = \id(\mathcal{H}) = \id(G)
\]
by the choice of~$\mathcal{H}$.
\end{proof}

For arbitrary $p$, something like Proposition~\ref{prop:graphs} need not hold.  As an example, if $G = K_{1, \infty}$, an infinite star, then $\id_p(G) = 1-p$.  If $p$ is transcendental, there is no finite graph with $\id_p(G) = 1-p$.

A further consequence of Proposition~\ref{prop:almost-const-subseq}, is that the set $\mathbf{H}_{k,p}$ is `reverse well-ordered'.

\begin{theorem}\label{thm:no-inc-seq}
The set $\mathbf{H}_{k,p}$ contains no infinite increasing sequences.
\end{theorem}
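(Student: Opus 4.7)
The plan is to argue by contradiction. Suppose $\{x_n\}_{n \geq 1} \subseteq \mathbf{H}_{k,p}$ is an infinite strictly increasing sequence. After discarding $x_1$ if it is $0$, I may assume $x_n > 0$ for every $n$, and set $S = \sup_n x_n \leq 1$; strict monotonicity then forces $x_n < S$ for every $n$ and $x_n \to S$.

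Next I apply Proposition~\ref{prop:almost-const-subseq} to this sequence, obtaining countable hypergraphs $\mathcal{H}_n$ with $\id_p(\mathcal{H}_n) = x_n$ and $r(\mathcal{H}_n) \leq k$, a common finite core $\mathcal{H}_0$ on a finite vertex set $V_0$, and an increasing sequence $\{n_i\}_{i \geq 1}$ satisfying conditions~(a) and~(b). I then carry out the same construction as in the proof of Theorem~\ref{thm:closed-finite}: let $\mathcal{A}$ be the collection of subsets $A \subseteq V_0$ whose neighbourhood hypergraphs along the subsequence have unbounded matching number, and set $\mathcal{H}' = \mathcal{H}_0 \cup \mathcal{A}$, a finite hypergraph of rank at most $k$. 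The same sandwich estimate as in the main theorem then gives, for every $i$,
\[
\id_p(\mathcal{H}') = \id_p(\mathcal{H}_{n_i} \cup \mathcal{A}) \leq \id_p(\mathcal{H}_{n_i}) = x_{n_i} \leq \id_p(\mathcal{H}') + (1+p)^{|V_0|}\bigl(1 - p^k\bigr)^i,
\]
where the first equality uses that every hyperedge of $\mathcal{H}_{n_i}$ outside $V_0$ is redundant once the sets in $\mathcal{A}$ have been added, and the right-hand bound follows from repeated application of Lemma~\ref{lem:add-set} using condition~(b) of the Proposition, which guarantees matchings of size at least $i$ in the relevant neighbourhood hypergraphs.

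Letting $i \to \infty$ forces $x_{n_i} \to \id_p(\mathcal{H}')$, so $S = \id_p(\mathcal{H}')$; but then the lower bound $x_{n_i} \geq \id_p(\mathcal{H}')$ yields $x_{n_i} \geq S$, contradicting $x_{n_i} < S$. The whole theorem thus reduces to the observation---already essentially present in the proof of Theorem~\ref{thm:closed-finite}---that the approximating finite hypergraph $\mathcal{H}'$ furnishes a lower bound on $x_{n_i}$ at every stage of the subsequence, not merely in the limit, so a strictly increasing sequence cannot approach $\id_p(\mathcal{H}')$. I do not anticipate a real obstacle; all the combinatorial work has already been absorbed by Proposition~\ref{prop:almost-const-subseq}.
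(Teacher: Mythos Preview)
Your proposal is correct and follows essentially the same approach as the paper: apply Proposition~\ref{prop:almost-const-subseq}, form the finite hypergraph $\mathcal{H}' = \mathcal{H}_0 \cup \mathcal{A}$, and use the sandwich estimate from the proof of Theorem~\ref{thm:closed-finite} to see that $\id_p(\mathcal{H}')$ is both the limit of and a lower bound for the subsequence, contradicting strict increase. Your extraction of the contradiction via $x_{n_i} \ge \id_p(\mathcal{H}') = S > x_{n_i}$ is a slight streamlining of the paper's version, which instead invokes the \emph{strict} inequality $\id_p(\mathcal{H}_0\cup\mathcal{A}) < \id_p(\mathcal{H}_n)$ from Lemma~\ref{lem:add-set} (requiring the antichain reduction and a separate treatment of the case $\mathcal{A}=\emptyset$); your route avoids both of those small detours.
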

\begin{proof}
Let $\{\mathcal{H}_n\}_{n \geq 1}$ be a sequence of countable hypergraphs with $r(\mathcal{H}_n) \leq k$ and non-decreasing independence densities:
\[
 0 < \id_p(\mathcal{H}_1) \leq \id_p(\mathcal{H}_2) \leq \dots
\]
It shall be shown that this sequence is eventually constant.  Indeed, if $\{\id_p(\mathcal{H}_n)\}$ contains a subsequence that is eventually constant, then the original sequence is itself eventually constant.

By Theorem~\ref{thm:closed-finite}, assume that all of the hypergraphs $\mathcal{H}_n$ are finite and again by Proposition~\ref{prop:almost-const-subseq}, possibly passing to a subsequence, assume that there is a finite hypergraph $\mathcal{H}_0$ on vertex set $V_0$ so that for every~$n$, $\mathcal{H}_n[V_0] = \mathcal{H}_0$ and that if $E \in \mathcal{H}_n$ with $E \setminus V_0 \neq \emptyset$, then there exists $A \subseteq E \cap V_0$ such that for every $n \geq 1$,
\[
 \mu\big(\{F \subseteq V_0^c \mid F \cup A \in \mathcal{H}_n \}\big) \geq n.
\]
Let $\mathcal{A}$ be the collection of all such sets $A \subseteq V_0$. As before, assume without loss of generality that each $\mathcal{H}_n$ is an antichain.

If $\mathcal{A} = \emptyset$, then $\mathcal{H}_n = \mathcal{H}_0$ and so the sequence $\{\id_p(\mathcal{H}_n)\}_{n \geq 1}$ is constant.

Suppose now, in hopes of a contradiction, that $\mathcal{A} \neq \emptyset$. Note that, by the definition of $\mathcal{A}$ and then since $\mathcal{A} \neq \emptyset$, for every~$n$,
\[
 \id_p(\mathcal{H}_0 \cup \mathcal{A}) < \id_p(\mathcal{H}_n).
\]
On the other hand, as in the proof of Theorem~\ref{thm:closed-finite},
\[
 \id_p(\mathcal{H}_0 \cup \mathcal{A}) = \lim_{n \to \infty} \id_p(\mathcal{H}_n).
\]
Then
\begin{align*}
\lim_{n \to \infty} \id_p(\mathcal{H}_n)
	& = \id_p(\mathcal{H}_0 \cup \mathcal{A})\\
	& < \id_p(\mathcal{H}_1)\\
	& \leq \lim_{n \to \infty} \id_p(\mathcal{H}_n)	&&\text{(since the sequence is non-decreasing)}
\end{align*}
which is a contradiction.

Thus, $\mathcal{A} = \emptyset$ and for all~$n$, $\id_p(\mathcal{H}_n) = \id_p(\mathcal{H}_0)$.
\end{proof}

\section{Open Problems}\label{sec:open}

In this section, we note some open problems related to the results given in this paper.

In Theorem~\ref{thm:no-inc-seq}, it was shown that for any $k, p$, the set $\mathbf{H}_{k,p}$ does not contain any infinite increasing sequences though it does contain infinite decreasing sequences.  Thus, this set of reals, with the usual ordering reversed, has the order type of some infinite ordinal number.

\begin{question}
For any $k \geq 2$, $p \in (0,1)$, what is the order type of the set $\mathbf{H}_{k,p}$?
\end{question}

In the proof of Theorem~\ref{thm:closed-finite}, hyperedges are added to the hypergraph that are smaller than some fixed hyperedge already in the hypergraph.  What can be said if we are restricted to uniform hypergraphs?

\begin{question}
In the case $p = 1/2$ can Proposition~\ref{prop:graphs} be extended to $k > 2$?  That is, for any $k \geq 2$, if $\mathcal{H}$ is a $k$-uniform countable hypergraph with $\id(\mathcal{H}) > 0$, does there exist a finite $k$-uniform hypergraph $\mathcal{H}'$ with $\id(\mathcal{H}') = \id(\mathcal{H})$?
\end{question}

\end{document}